\documentclass[11pt,a4paper]{article}
\usepackage[utf8]{inputenc}
\usepackage[OT4]{fontenc}
\usepackage{latexsym}
\usepackage{amsfonts}
\usepackage{amsmath}
\usepackage{amssymb}
\usepackage{amsthm}
\usepackage{extpfeil}
\usepackage{verbatim}
\usepackage[nobysame]{amsrefs}
\usepackage{hyperref}
\hypersetup{pdfborder=false, pdftitle={Remarks on coarse triviality of asymptotic Assouad-Nagata dimension}}

\newtheorem{thm}{Theorem}[section]
\newtheorem*{thm*}{Theorem}
\newtheorem{lem}[thm]{Lemma}
\newtheorem{rem}[thm]{Remark}
\newtheorem{prop}[thm]{Proposition}
\newtheorem{cor}[thm]{Corollary}
\newtheorem{ex}[thm]{Example}
\theoremstyle{definition}
\newtheorem{defi}[thm]{Definition}

\DeclareMathOperator{\asdim}{asdim}
\DeclareMathOperator{\asdimAN}{asdim_{AN}}
\DeclareMathOperator{\im}{im}
\def\U{{\mathcal U}}
\def\R{{\mathbb R}}
\def\N{{\mathbb N}}
\def\Z{{\mathbb Z}}
\def\notion{\textit}
\def\eps{\varepsilon}
\def\implied{\Longleftarrow}

\author{Damian Sawicki\thanks{damian.sawicki@students.mimuw.edu.pl}}

\title{Remarks on coarse triviality\\
of asymptotic Assouad-Nagata dimension}

\date{September 26, 2013}

\begin{document}

\maketitle

\begin{abstract}
We show for a given metric space $(X,d)$ of asymptotic dimension~$n$ that there exists a coarsely and topologically equivalent hyperbolic metric~$d'$ of the form $d'=f\circ d$ such that $(X,d')$ is of asymptotic Assouad-Nagata dimension~$n$. As a corollary we construct examples of spaces realising strict inequality in the logarithmic law for $\asdimAN$ of a Cartesian product. One of them may be viewed as a counterexample to a specific kind of a Morita-type theorem for $\asdimAN$.
\end{abstract}

\section*{Introduction}
In large scale geometry two categories are usually considered: the coarse category and the large scale Lipschitz category. Since any large scale Lipschitz isomorphism (\notion{quasi-isometry}) is a coarse isomorphism, but not conversely, one may ask about the differences between those categories -- for example how may large scale Lipschitz invariants differ in a class of coarsely equivalent spaces and what relations are there between invariants of both categories.

The key tool is the principle that a metric composed with a concave function is still a valid metric; i.e., it satisfies the triangle inequality. This leads to the main result:

\begin{thm}\label{mainTheorem}
For any metric space $(X,d)$ of finite asymptotic dimension $\asdim (X,d) = n$ there is a coarsely and topologically equivalent hyperbolic metric~$d'$ of the form $d'=f\circ d$, such that asymptotic Assouad-Nagata dimension $\asdimAN(X,d')$ is also equal to $n$.
\end{thm}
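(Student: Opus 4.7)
The plan is to construct an increasing, continuous, concave function $f\colon [0,\infty)\to [0,\infty)$ with $f(0)=0$ and $\lim_{t\to\infty} f(t)=\infty$, and then set $d':=f\circ d$. Concavity together with $f(0)=0$ gives subadditivity, which together with monotonicity ensures that $d'$ satisfies the triangle inequality; $f$ being a continuous bijection of $[0,\infty)$ fixing $0$ yields topological equivalence; unboundedness of $f$ and $f^{-1}$ yields coarse equivalence. Because asymptotic dimension is a coarse invariant, $\asdim(X,d')=\asdim(X,d)=n$, so the general inequality $\asdimAN\geq\asdim$ supplies the lower bound $\asdimAN(X,d')\geq n$ for free. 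All the real content is packed into the choice of $f$.

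From $\asdim(X,d)=n$, extract a sequence of covers $\U_k$ of multiplicity $\leq n+1$, with Lebesgue numbers $R_k\to\infty$ and finite meshes $D_k\geq R_k$. By passing to a sparse subsequence, I would arrange for $R_k$ to grow as rapidly as desired. Then prescribe the values $f(R_k)$ and $f(D_k)$ so that $f(D_k)\leq C\cdot f(R_k)$ and $f(R_{k+1})\leq C\cdot f(R_k)$ for a fixed constant $C$, extending $f$ by linear interpolation between consecutive prescribed points while preserving concavity (which constrains how quickly the prescribed values are allowed to grow). In the new metric each $\U_k$ has Lebesgue number $\geq f(R_k)$ and mesh $\leq f(D_k)\leq C\cdot f(R_k)$; sandwiching a target scale $R'$ between two consecutive values $f(R_k)$ therefore produces a cover of multiplicity $\leq n+1$, Lebesgue number $\gtrsim R'$, and mesh $\lesssim R'$, witnessing linear dimension control, and so $\asdimAN(X,d')\leq n$.

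The main obstacle is hyperbolicity. The idea here is to exploit the dual role of $f$ being concave and growing slowly: the differences of Gromov products appearing in the four-point condition reduce to expressions of the form $f(a)+f(b)-f(a+b)$, which concavity forces to be nonnegative but which slow growth of $f$ keeps uniformly bounded. My strategy would be to verify Gromov's four-point condition directly from these estimates, using additionally the nested tree-like combinatorics of the covers $\U_k$: at scale $f(R_k)$, the space is approximated by the nerve of $\U_k$, which is intrinsically close to a tree, and telescoping across $k$ should upgrade this to global hyperbolicity. I expect that reconciling the hyperbolicity constraint with the linear-dimension-control constraint --- that is, choosing the growth rate of $f(R_k)$ simultaneously slow enough for hyperbolicity and compatible with the concave interpolation needed for $\asdimAN\leq n$ --- is the delicate step of the argument.
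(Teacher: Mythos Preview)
Your construction of the concave function $f$ and the verification that $\asdimAN(X,d')\leq n$ is essentially the paper's argument: the paper sets $c(a_k)=k+1$ with $a_k=\max\big(D_X(a_{k-1}),\,2a_{k-1}-a_{k-2}\big)$ and interpolates linearly, which is exactly your ``prescribe values sparsely, interpolate, keep concave'' scheme. So that half is fine.

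The gap is hyperbolicity. Your proposed route---verifying the four-point condition from the ``nested tree-like combinatorics'' of the covers $\U_k$---will not work. The covers witnessing $\asdim\leq n$ have nerves that are $n$-dimensional complexes, not anything tree-like; there is no reason for them to be nested; and the quantity $f(a)+f(b)-f(a+b)$ is \emph{not} kept uniformly bounded by slow growth (for $f=\ln(1+\cdot)$ it behaves like $\ln\min(a,b)$). So neither the combinatorial picture nor the analytic estimate you sketch is available.

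The paper resolves hyperbolicity by a completely different, and much cheaper, device: after building $c$ as above, it simply post-composes with $l(r)=\ln(1+r)$. Gromov's classical observation says that $(Y,\,l\circ d_Y)$ is hyperbolic for \emph{any} metric space $(Y,d_Y)$, so $(X,\,l\circ c\circ d)$ is hyperbolic regardless of the structure of $X$. The only thing left to check is that this extra composition does not destroy the affine control, and that is handled by a separate remark (the paper's Remark~\ref{nonIncreasingControl}): if $D_X(r)=Ar+B$ is an affine $n$-dimensional control function for $(X,d')$ and $l$ is concave with $l(0)=0$, then $r\mapsto Ar+l(B)$ is an affine control function for $(X,\,l\circ d')$. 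Thus hyperbolicity is decoupled from the dimension estimate rather than balanced against it, and there is no ``delicate step'' at all.
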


One should note that core of the above result is not new. A similar theorem was proved by Brodskiy et al. in \cite[thm. 5.1]{viaLipschitz} and is also a~consequence of the fact that universal spaces for asymptotic dimension of Dranishnikov and Zarichnyi allow linear control (\cite[prop. 4.1]{Universal}, for proper metric spaces). In \cite[prop. 6.6]{BDLM} for countable groups  Brodskiy et al. found an equivalent metric which is additionally left-invariant.

The new is a more abstract point of view relying on the notion of the \emph{control function}\footnote{Language of $n$-dimensional control functions (see def. \ref{controlFunction}) proposed for example in~\cite{BDLM} is very useful for comparing different definitions of dimension.}  for describing asymptotic dimension and on construction of the new metric directly from the original. This approach allows to save much of the structure of the original metric. In particular, left-invariance for groups is preserved as in \cite{BDLM} and if the original metric is an $\ell_\infty$-metric on a Cartesian product then so is the new one. Due to the latter one can immediately construct examples realising a strict inequality in the logarithmic law: $\asdimAN X\times Y \leq \asdimAN X + \asdimAN Y$, from the known examples for $\asdim$ (\cite[corollary 7.7]{firstCounter}, \cite[theorem 5.3]{counterMorita}, \cite[section 5]{Lebedeva}). Earlier examples with such properties can be found in \cite{firstCounter} and \cite{Lebedeva}.

\subsection*{Acknowledgements}
This paper is based on a BSc thesis written at the Faculty of Mathematics, Informatics and Mechanics of the University of Warsaw (\cite{licencjat}).

The author is grateful to the supervisor of the original BSc thesis,\linebreak dr~Tadeusz~Koźniewski and the co-host of the BSc seminar, prof. Sławomir Nowak. He received valuable help from them on preparing both the thesis and this paper. I would also like to thank dr Piotr Nowak for pointing out \cite{Lebedeva} as a source of examples for $\asdimAN$, proofreading and a lot of valuable corrections.


\section{Preliminaries}

\subsection*{Definitions}

\begin{defi}\label{controlFunction}
For a metric space $X$ an \emph{$n$-dimensional control function} is any function $D_X\colon \R_+\to \R_+\cup \{\infty\}$ such that for every $r \in \R_+$ there is a cover $\U=\bigcup\limits_{i=1}^{n+1}\U_i$ of $X$ such that
each family $\U_i$ is $r$-disjoint and the diameter of elements of $\U$ is bounded by $D_X(r)$.
\end{defi}

\begin{defi}
The space $X$ is of \emph{asymptotic dimension} at most $n$, denoted $\asdim X \leq n$, if there is a \emph{real-valued} $n$-dimensional control function for $X$.
\end{defi}
\begin{defi}
The space $X$ is of \emph{asymptotic Assouad-Nagata dimension} at most $n$, denoted $\asdimAN X \leq n$ if there is an \emph{affine} $n$-dimensional control function for $X$.
\end{defi}

\subsection*{Main tool}

\begin{prop}\label{prelemma} Assume $c:[0,\infty)\to[0,\infty)$ is nondecreasing. The following conditions are equivalent:
\begin{itemize}
\item[--]
$c$ is subadditive (i.e., $c(a+b)\leq c(a)+c(b)$) and $c(r)=0$ iff $r=0$,
\item[--]
for any metric space $(X,d)$ function $d'=c\circ d$ is a valid metric on $X$.
\end{itemize}
\end{prop}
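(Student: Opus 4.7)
My plan is to prove both implications directly, using the nondecreasing hypothesis in both directions.

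For the forward direction, assume $c$ is subadditive with $c(r)=0\iff r=0$, and let $(X,d)$ be an arbitrary metric space. I would verify the three axioms for $d'=c\circ d$. Nonnegativity is immediate from the codomain of $c$. The condition $d'(x,y)=0\iff x=y$ follows directly from $c(r)=0\iff r=0$ applied to $r=d(x,y)$. Symmetry is inherited from $d$. For the triangle inequality, I would chain: $d'(x,z)=c(d(x,z))\leq c(d(x,y)+d(y,z))\leq c(d(x,y))+c(d(y,z))=d'(x,y)+d'(y,z)$, where the first inequality uses the triangle inequality for $d$ together with $c$ nondecreasing, and the second uses subadditivity.

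For the reverse direction, I would extract the two required properties by testing the assumption on small, explicitly chosen metric spaces. To get $c(r)=0\iff r=0$, take $X=\{x,y\}$ with $d(x,y)=r$ for each $r>0$; since $d'$ must be a metric and $x\neq y$, we get $c(r)=d'(x,y)>0$, while $c(0)=d'(x,x)=0$. To obtain subadditivity, given $a,b\geq 0$ I would consider the three-point space $X=\{x,y,z\}$ with $d(x,y)=a$, $d(y,z)=b$, and $d(x,z)=a+b$; this is a valid metric (a path), so $d'$ is a metric by hypothesis, and its triangle inequality at $x,y,z$ reads $c(a+b)\leq c(a)+c(b)$.

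The main obstacle — and really the only nontrivial point — is recognising that one must build three-point ``collinear'' metric spaces realising arbitrary prescribed $a,b$ in order to force subadditivity; once such spaces are in hand, subadditivity falls out of a single application of the triangle inequality for $d'$. The degenerate cases $a=0$ or $b=0$ cause no trouble, as the three-point space collapses to a two-point space for which the needed inequality is trivial. No delicate estimates are required; the nondecreasing hypothesis is used only in the forward direction to push the triangle inequality of $d$ through $c$.
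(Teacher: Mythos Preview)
Your proof is correct and follows essentially the same approach as the paper. The only cosmetic difference is in the reverse direction: the paper tests the hypothesis on the single space $X=\mathbb{R}$ with the standard metric (using the points $0,a,a+b$), whereas you build bespoke two- and three-point spaces; in both cases subadditivity is read off from the triangle inequality on three collinear points, and both arguments observe that the nondecreasing hypothesis is not needed for this implication.
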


\begin{proof} $\implies$ It is obvious that $d'$ is symmetric and $d'(x,y) = 0$ iff $x=y$. For $x,y,z\in X$ we obtain the triangle inequality as follows:
$$
c\circ d(z,x) \leq c(d(x,y)+d(y,z)) \leq c\circ d(x,y) + c\circ d(y,z),
$$
where the first inequality follows from the monotonicity of $c$ and the triangle inequality for $d$ and the second from the subadditivity of $c$.

$\implied$ The second condition is obvious. For the first consider $X=\mathbb R$ (with the standard metric induced by the absolute value) and $a,b\geq 0$. Then from the triangle inequality $d'(a+b,0)\leq d'(0,a)+d'(a,a+b)$ we get subadditivity\footnotemark\ $c(a+b)\leq{}c(a)+c(b)$.
\end{proof}

\footnotetext{Note that we did not use the extra assumption that $c$ is nondecreasing for this implication. In fact it is possible that $c$ is not nondecreasing if we only assume that $c\circ d$ is a metric for any $d$: take $0<p\leq q \leq r \leq 2p$. Then for any $c:\R_+\to\{p,q,r\}$ and any $d$ the composition $c\circ d$ is a valid metric.}

\begin{lem}\label{lemma} Assume $c:[0,\infty)\to[0,\infty)$ is concave and $c(r)=0$ iff $r=0$. Then for any metric space $(X,d)$ function $d'=c\circ d$ is also a valid metric on $X$.
\end{lem}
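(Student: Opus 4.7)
The plan is to reduce to Proposition \ref{prelemma} by verifying its two hypotheses: the condition $c(r)=0\iff r=0$ is given, so I only need that $c$ is nondecreasing and subadditive. Both facts will come straight from concavity together with $c(0)=0$, so the main content is two short convexity-type arguments.

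For subadditivity, the idea is the standard trick of writing each of $a,b$ as a convex combination of $0$ and $a+b$. Explicitly, for $a,b>0$ I will use
\[
a = \tfrac{a}{a+b}(a+b) + \tfrac{b}{a+b}\cdot 0,\qquad
b = \tfrac{b}{a+b}(a+b) + \tfrac{a}{a+b}\cdot 0,
\]
apply concavity of $c$ to each, exploit $c(0)=0$, and add the two resulting inequalities to obtain $c(a+b)\le c(a)+c(b)$. Boundary cases $a=0$ or $b=0$ are trivial.

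For monotonicity, I would argue by contradiction: if $c(a)>c(b)$ for some $0<a<b$, then concavity forces the graph of $c$ on $[b,\infty)$ to lie on or below the secant line through $(a,c(a))$ and $(b,c(b))$, which has strictly negative slope. Consequently the secant line hits $0$ at some finite $x^{\ast}>b$, and concavity together with $c\ge 0$ then forces $c(x^{\ast})=0$, contradicting the hypothesis that $c$ vanishes only at $0$. Hence $c$ is nondecreasing, and Proposition \ref{prelemma} applies.

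The only potential pitfall I see is that concavity on $[0,\infty)$ does not by itself imply monotonicity — one genuinely needs the positivity condition $c(r)>0$ for $r>0$ to rule out functions like a concave tent that eventually decreases. The argument above is the cleanest way I know to use this positivity, and once monotonicity and subadditivity are in hand the previous proposition finishes the proof.
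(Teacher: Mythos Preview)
Your proof is correct and follows essentially the same route as the paper: reduce to Proposition~\ref{prelemma} by checking that $c$ is nondecreasing and subadditive. Your convex-combination argument for subadditivity is exactly the paper's monotone-difference-quotient argument in different clothing (both amount to $\tfrac{c(a+b)}{a+b}\le \tfrac{c(a)}{a}$), and for monotonicity the paper simply asserts that a \emph{nonnegative} concave function on $[0,\infty)$ is nondecreasing---so in fact the strict positivity $c(r)>0$ for $r>0$ is not needed there, only $c\ge 0$.
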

\begin{proof} We will use the previous proposition. It is easy to notice that a nonnegative concave function on $[0,\infty)$ must be nondecreasing, so it is enough to check that $c$ is subadditive. Take any $a,b>0$:
\begin{multline*}
c(a+b) =
a\cdot \frac{c(a+b)}{a+b} + b \cdot \frac{c(a+b)}{a+b}
\leq
a\cdot \frac{c(a)}{a} + b \cdot \frac{c(b)}{b} = c(a)+c(b)
\end{multline*}
where the inequality is a consequence of the fact that concave functions have monotone difference quotients.
\end{proof}

There are functions satisfying assumptions of proposition \ref{prelemma} that are not concave ($c(r) = \max(\min(r,\ 1/2),\ r/2)$), do not have monotone difference quotiens of the form $\frac{c(r)}{r}$ ($c(r)=\frac{\lfloor r\rfloor}{2} + \min(\{r\}, 1/2)$) or even have discrete codomain ($c(r) = \lceil \log_2r\rceil_+ + 1$), but we will use concave functions only.

\begin{rem}\label{equivalentMetric}
Under the assumptions of the previous lemma: if $c$ is unbounded, then $(X,d)$ and $(X,d')$ are coarsely equivalent (via the identity function) and if it is continuous at $0$, then they are homeomorphic.
\end{rem}
\begin{proof} First assume that $c$ is unbounded. It follows that it is strictly increasing and thus has an inverse $c^{-1}$. Thus we have a two-sided inequality (which is equality in fact) from the definition of coarse equivalence for $id_X^{-1}: (X,d') \to (X,d)$:
$$ c^{-1}\big(d'(x,y)\big) \leq d(x,y) \leq c^{-1}\big(d'(x,y)\big),$$
where $c^{-1}$ is monotone and unbounded.

If $c$ is continuous at $0$, then $id_X$ is clearly continuous, and since $c$ must be strictly increasing in some neighbourhood of $0$, then locally $c$ has an inverse $c^{-1}$ which is continuous at $0$ and thus $id_X^{-1}$ is continuous, too.
\end{proof}

\begin{rem}\label{nonIncreasingControl}
Under the assumptions of lemma \ref{lemma}: $\asdimAN (X,d')$ is at~most $\asdimAN (X,d)$. More precisely, if $D_X$ is an affine $n$-dimensional control function for $(X,d)$, then then there exists a constant $C$ such that $D_X'(r)=D_X(r)+C$ is an $n$-dimensional control function for $(X,d')$.
\end{rem}
\begin{proof}
The bounded case is trivial, so one can assume that $c$ is unbounded. As such, it is strictly increasing and, consequently, has an inverse with domain $\im(c)=\{0\}\cup(a, \infty)$. Let us extend this inverse to $[0,\infty)$ by $0$ and denote by~$c^{-1}$.

Let $D_X(r)=Ar + B$, with $A\geq 1$. Clearly $\tilde D_X(r') = c \circ D_X(c^{-1}(r'))$ is a control function for $(X,d')$. For simplicity we will denote $c^{-1}(r')$ by $r$ and assume it is nonzero.
\begin{multline*} \tilde D_X(r') = c (A\cdot r + B) = \big(c (A\cdot r) - c(0)\big) + \big( c(A\cdot r+B) - c(A\cdot r) \big) = \\
=
A\cdot r \cdot \frac{c (A\cdot r) - c(0)}{A\cdot r} +
B\cdot\frac{c(A\cdot r + B) - c(A\cdot r)}{B} \leq \\
\leq
A\cdot r \cdot \frac{c (r) - c(0)}{r} +
B\cdot\frac{c(B) - c(0)}{B} = A r' + c(B),
\end{multline*}
where the inequality relies on the concavity of $c$.
\end{proof}


\section[Coarse triviality of asdimAN]{Coarse triviality of $\boldsymbol\asdimAN$}
Let us recall and prove the main theorem \ref{mainTheorem} stated already in the Introduction.

\begin{thm*} Assume $\asdim (X,d) = n$. There is a hyperbolic metric $d'$, coarsely and topologically equivalent to $d$, of the form $d'=c\circ d$, such that $\asdimAN(X,d') = \asdim(X,d') =n$.
\end{thm*}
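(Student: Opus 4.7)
The plan is to construct a concave reparametrization $c\colon[0,\infty)\to[0,\infty)$ adapted to a control function of $(X,d)$ and set $d'=c\circ d$. Lemma~\ref{lemma} then guarantees $d'$ is a metric, Remark~\ref{equivalentMetric} yields coarse and topological equivalence to $d$, and a careful choice of $c$ will force both an affine $n$-dimensional control function on $(X,d')$ and a relaxed ultrametric inequality that implies hyperbolicity.

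Fix an $n$-dimensional control function $D_X$ for $(X,d)$. Replacing $D_X$ by $r\mapsto\max\{D_X(r),2r+1\}$, I may assume $D_X$ is nondecreasing and $D_X(r)\geq 2r+1$. Recursively define $a_0=0$ and $a_{k+1}=D_X(a_k)+1$, and let $c$ be the piecewise linear function on $[0,\infty)$ with $c(a_k)=k$, linear on each $[a_k,a_{k+1}]$. A short induction using $D_X(r)\geq 2r+1$ shows that the differences $a_{k+1}-a_k$ are nondecreasing, so the slopes of $c$ decrease; hence $c$ is concave, continuous, strictly increasing, with $c(0)=0$.

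For the control-function estimate, given $s\geq 0$ set $r=c^{-1}(s)$ and pick $k$ with $a_k\leq r\leq a_{k+1}$, so $k\leq s\leq k+1$. A cover of $(X,d)$ by $n+1$ $r$-disjoint families with diameters at most $D_X(r)$ is, in the metric $d'$, a cover by $n+1$ $s$-disjoint families with diameters bounded by
\[
c(D_X(r))\leq c(D_X(a_{k+1}))=c(a_{k+2}-1)\leq c(a_{k+2})=k+2\leq s+2.
\]
Thus $s\mapsto s+2$ is an affine $n$-dimensional control function for $(X,d')$, giving $\asdimAN(X,d')\leq n$. Coarse invariance of $\asdim$ yields $\asdim(X,d')=n$, and then $\asdim\leq\asdimAN$ forces $\asdimAN(X,d')=n$.

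For hyperbolicity, $a_{k+1}\geq 2a_k+1$ combined with concavity of $c$ yields the doubling bound $c(2r)\leq c(r)+2$, whence
\[
d'(x,z)\leq c\bigl(2\max\{d(x,y),d(y,z)\}\bigr)\leq\max\{d'(x,y),d'(y,z)\}+2,
\]
i.e.\ $d'$ is an ultrametric up to additive error~$2$. A direct verification of Gromov's four-point condition shows that any such metric is $\delta$-hyperbolic for some $\delta$ depending only on the additive error. The main obstacle is that $c$ must satisfy three competing conditions simultaneously -- concavity (for the triangle inequality), absorption $c\circ D_X\leq c+\mathrm{const}$ (for the affine control function), and the doubling bound $c(2r)\leq c(r)+\mathrm{const}$ (for hyperbolicity) -- and the recursive construction on $(a_k)$ with the inflation $D_X(r)\geq 2r+1$ is precisely calibrated to handle all three at once.
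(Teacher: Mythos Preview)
Your argument is correct and shares the paper's core idea: build a piecewise-linear concave $c$ on a recursively defined sequence $(a_k)$ growing fast enough that $c\circ D_X\circ c^{-1}$ becomes affine. The differences are in the bookkeeping. For concavity, the paper inserts the term $2a_{k-1}-a_{k-2}$ into the recursion explicitly, whereas you inflate $D_X$ to satisfy $D_X(r)\geq 2r+1$ beforehand; your observation $a_{k+2}-a_{k+1}\geq a_{k+1}+2>a_{k+1}\geq a_{k+1}-a_k$ then gives monotone increments for free. For hyperbolicity the routes genuinely diverge: the paper treats it as a post-processing step, composing the already-constructed $c$ with $r\mapsto\ln(r+1)$ and invoking Remark~\ref{nonIncreasingControl} to see that the affine control function survives; you instead extract the doubling bound $c(2r)\leq c(r)+2$ directly from $a_{k+1}\geq 2a_k$ and deduce a quasi-ultrametric inequality. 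Your route is more self-contained (one construction handles all three constraints simultaneously, as you note), while the paper's is more modular and reusable, since the $\ln$-trick and Remark~\ref{nonIncreasingControl} apply to any concave reparametrization. One small point: the implication ``quasi-ultrametric $\Rightarrow$ hyperbolic'' that you invoke is standard but not entirely trivial---a short case analysis on the three pairwise sums in the four-point condition does the job, and it would strengthen the write-up to include it.
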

\begin{proof}
We will use lemma \ref{lemma} together with the subsequent remark \ref{equivalentMetric}. We will inductively construct an appropriate function~$c$. Let $D_X$ be a real-valued $n$-dimensional control function for $X$. Without loss of generality one can assume that $D_X$ is nondecreasing.

Let $c_{|[0,1]}$ be the identity function, $a_{-1}=0$ and $a_0=1$. In the $k$th inductive step we will assume that $c$ is defined for $[0,a_{k-1}]$ as a piecewise affine, increasing concave function and $c(a_{l-1})=l$ for all $l\in\N\cap[0,k]$. So, let
\begin{equation}\label{a_kFormula}
a_k=\max\big( D_X(a_{k-1}),\ \ a_{k-1} + (a_{k-1} - a_{k-2}) \big)
\end{equation}
-- the role of the first argument of $\max$ is to decrease $\asdimAN$ and the second argument guarantees concavity of $c$. Set $c(a_k)=k+1$ and let $c$ be affine on $[a_{k-1},a_k]$.

The final function $c$ clearly satisfies the assumptions of \ref{lemma} and \ref{equivalentMetric}. We will show that $\asdimAN(X,d')=n$ for $d'=c\circ d$. As before (\ref{nonIncreasingControl}) we have a control function $\tilde D_X$ for $(X,d')$ of the form: $\tilde D_X(r') = c\circ D_X(c^{-1}(r'))$. Assume that $r'\in[k-1,k]$ for some $k\in \N$; i.e., that $c^{-1}(r')\in [a_{k-2}, a_{k-1}]$. Then, by the monotonicity of $D_X$ and formula \ref{a_kFormula},
$$D_X(c^{-1}(r'))\leq D_X(a_{k-1})\leq~a_k,$$
so:
$$\tilde D_X(r') = c\big(D_X(c^{-1}(r'))\big) \leq c(a_k) = k+1 \leq r' + 2.$$

It remains to handle hyperbolicity. It is known from the classic works of Gromov that for any metric space $(Y,d_Y)$ and $l(r)=\ln(r+1)$, the metric space $(Y, l\circ d_Y)$ is hyperbolic. By remark \ref{nonIncreasingControl} we know that for the metric $d''=l \circ c\circ d$ it is still true that $\asdimAN(X,d'')=n$ (with control function $D_{(X,d'')}(r'') = r'' + \ln3$).
\end{proof}

An interested reader may compare the above proof with the proof in~\cite{viaLipschitz}.

The fact that $d'$ from the above theorem is a function of the original metric $d$ implies that a lot of properties of $(X,d)$ is inherited by $(X,d')$, for example:
left-invariance of metric;
product structure (if $X=\prod X_i$ and $d=\sup d_i$, then $d' = \sup d_i'$);
isometric group actions on $X$;
properness;
topology.
What can't be inherited is the metric being a word metric or even being only quasi-geodesic, because any coarse equivalence of quasi-geodesic spaces is a quasi-isometry and quasi-isometries preserve $\asdimAN$.

In the subsequent corollary and examples we will omit the information that the respective metrics may be required to be hyperbolic.

\begin{cor}\label{mainForProduct} Assume $X=\prod_{i=1}^l X_i$, $d(x,y)=\max d_i(x_i,y_i)$ and $\asdim$ of $(X,d), (X_i,d_i)$ is $n,n_i$ respectively. There is a function $c$, such that metrics~$d',d_i'$ of the form $d'=c\circ d$, $d_i'=c \circ d_i$  are coarsely and topologically equivalent to the original and the following equalities hold:
\begin{alignat*}{3}
\asdimAN(X,d')     & = & \asdim(X,d')    & = n\\
\asdimAN(X_i,d_i') & = &\ \asdim(X_i,d_i') & = n_i.
\end{alignat*}
\end{cor}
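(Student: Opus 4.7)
The core observation is that a single function $c$ can be built to work simultaneously for all $l+1$ spaces, because the inductive construction in the proof of Theorem~\ref{mainTheorem} only consults its control function at the previously chosen breakpoints $a_{k-1}$. First I would fix nondecreasing real-valued control functions $D_X$ (of dimension $n$) and $D_{X_i}$ (of dimension $n_i$), and set
\[ D(r) = \max\bigl(D_X(r),\, D_{X_1}(r),\ldots, D_{X_l}(r)\bigr). \]
Then I would run the same inductive recipe as in Theorem~\ref{mainTheorem}, but with $D$ in place of $D_X$: set $a_{-1}=0$, $a_0 = 1$,
\[ a_k = \max\bigl(D(a_{k-1}),\ 2a_{k-1} - a_{k-2}\bigr), \]
and let $c$ be piecewise affine with $c(a_k) = k+1$. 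Exactly as in the main proof, $c$ is concave, unbounded, and continuous at $0$, so Lemma~\ref{lemma} and Remark~\ref{equivalentMetric} apply to each of the $l+1$ metrics.

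Next I would read off the conclusions. By Remark~\ref{equivalentMetric} the identity maps $(X,d)\to(X,d')$ and $(X_i,d_i)\to(X_i,d_i')$ are coarse and topological equivalences, so $\asdim(X,d')=n$ and $\asdim(X_i,d_i')=n_i$ by coarse invariance of $\asdim$. Since $c$ is nondecreasing, the $\ell_\infty$-product structure is preserved: $d'(x,y) = c(\max_i d_i(x_i,y_i)) = \max_i d_i'(x_i,y_i)$, as already noted after Theorem~\ref{mainTheorem}. The bounds $\asdimAN(X,d')\leq n$ and $\asdimAN(X_i,d_i')\leq n_i$ then follow by replaying the computation at the end of the main proof: for $r' \in [k-1,k]$ one has $c^{-1}(r') \leq a_{k-1}$, hence $D_X(c^{-1}(r'))\leq D(a_{k-1})\leq a_k$ and analogously for every $D_{X_i}$, giving an affine control function $\tilde D(r') \leq r' + 2$ for each of the $l+1$ spaces. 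Combined with the general inequality $\asdimAN \geq \asdim$ this yields all the required equalities.

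The only subtlety is confirming that a single sequence $(a_k)$ really works for $l+1$ different spaces at once; this is immediate because the only way the control function entered the estimate was through the inequality ``$D_X(a_{k-1}) \leq a_k$'', which remains true after replacing $D_X$ by the pointwise maximum $D$. No change to the arguments in Theorem~\ref{mainTheorem} is needed beyond this bookkeeping.
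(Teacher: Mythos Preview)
Your proposal is correct and follows exactly the paper's approach: the paper's proof likewise consists solely of replacing formula~\eqref{a_kFormula} by
\[
a_k=\max\Big(D_X(a_{k-1}),\ \max_i D_{X_i}(a_{k-1}),\ a_{k-1}+(a_{k-1}-a_{k-2})\Big),
\]
which is your recursion with $D=\max(D_X,D_{X_1},\ldots,D_{X_l})$ unpacked. Your write-up is more detailed than the paper's one-line proof, but the idea is identical.
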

\begin{proof} We use the same idea as in the proof of the previous theorem. One should correct the formula \ref{a_kFormula} to the following:
$$a_k=\max\Big(
D_X(a_{k-1}),\ \
\max_i D_{X_i}(a_{k-1}),\ \
a_{k-1} + (a_{k-1} - a_{k-2})
\Big),$$
where $D_{X_i}$ is an $n_i$-dimensional control function for $X_i$.
\end{proof}

Constructions similar to the last corollary are possible: for example for an exact sequence of groups instead of the Cartesian product.


\section{Applications}

We will show how to use the derived results to construct examples of spaces realising strict inequality in the logarithimic formula for $\asdimAN$ (see \cite[theorem 2.5]{BDLM}):
$$\asdimAN X_1\times X_2 \leq \asdimAN X_1 + \asdimAN X_2.$$

\begin{thm}
There are metric spaces $X_1, X_2$ satisfying
$$\asdimAN X_1\times X_2 < \asdimAN X_1 + \asdimAN X_2.$$
\end{thm}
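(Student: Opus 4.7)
My plan is to bootstrap from the known strict-inequality examples for $\asdim$ cited in the introduction (\cite[cor.\ 7.7]{firstCounter}, \cite[thm.\ 5.3]{counterMorita}, \cite[sec.\ 5]{Lebedeva}), where spaces $(X_1,d_1)$ and $(X_2,d_2)$ are produced together with the $\ell_\infty$-product metric $d=\max(d_1,d_2)$ on $X_1\times X_2$ satisfying
$$\asdim(X_1\times X_2,\,d) \;<\; \asdim(X_1,d_1) + \asdim(X_2,d_2).$$
Since $\asdim$ is a coarse invariant and different $\ell_p$-product metrics are bi-Lipschitz equivalent, I may arrange that the cited example is stated with the sup-metric without loss.

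Next I would apply Corollary \ref{mainForProduct} to the two-factor product $X = X_1\times X_2$. This yields a single concave function $c$ such that the metrics $d_i' = c\circ d_i$ and $d' = c\circ d$ are coarsely and topologically equivalent to the originals. Because $c$ is nondecreasing, the product structure survives:
$$d' \;=\; c\circ\max(d_1,d_2) \;=\; \max\bigl(c\circ d_1,\, c\circ d_2\bigr) \;=\; \max(d_1',d_2').$$
Crucially, the corollary also delivers the equalities
$$\asdimAN(X_i,d_i') = \asdim(X_i,d_i), \qquad \asdimAN(X_1\times X_2,d') = \asdim(X_1\times X_2,d),$$
so the passage from $d_i$ to $d_i'$ collapses $\asdimAN$ onto $\asdim$ on all three spaces simultaneously.

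Substituting these equalities into the strict inequality for $\asdim$ transports it verbatim to $\asdimAN$:
$$\asdimAN(X_1\times X_2,\,d') \;<\; \asdimAN(X_1,d_1') + \asdimAN(X_2,d_2'),$$
so $(X_1,d_1')$ and $(X_2,d_2')$ witness the theorem. The only real obstacle is locating a concrete citation whose example is phrased (or can trivially be rephrased) with the sup-product metric, so that Corollary \ref{mainForProduct} applies directly; everything else — preservation of the product, the collapse $\asdimAN = \asdim$, and the inheritance of the strict inequality — is formal once the corollary is invoked.
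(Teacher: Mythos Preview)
Your proposal is correct and follows exactly the paper's own route: start from a known $\asdim$ counterexample and apply Corollary~\ref{mainForProduct} to force $\asdimAN=\asdim$ simultaneously on the factors and the product, thereby transporting the strict inequality. Your write-up is simply more explicit about why the sup-product structure is preserved under $c$ and why the $\ell_\infty$ assumption is harmless, but the underlying argument is the same.
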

\begin{proof} It is enough to take an example of spaces $(X_1,d_1), (X_2,d_2)$ satisfying $\asdim X_1\times X_2 < \asdim X_1 + \asdim X_2$ and apply corollary \ref{mainForProduct}, to get $(X_1,d_1'),(X_2,d_2')$ meeting the required conditions.
\end{proof}

The classic Morita theorem for dimension states that $\dim X\times \R$ is equal to $\dim X+1$. If we start with a counterexample to the Morita-type theorem for~$\asdim$, we obtain the following\footnote{A counterexample for $\asdim$ with the desired properties can be found in \cite[theorem~5.3]{counterMorita}, the asymptotic dimension of $X$ is equal to $2$.}:

\begin{ex} There are metric spaces $X,R$, where $R$ is coarsely equivalent to the reals $\R$ by a change of metric as in \ref{mainTheorem} (thus by \ref{nonIncreasingControl} $\asdimAN R=~\!\!1$) and $X$ is proper, of bounded geometry, such that $$\asdimAN X\times R = \asdimAN X < \asdimAN X + \asdimAN R.$$
\end{ex}

Interestingly, the above example shows that a Morita-type theorem for $\asdimAN$ (\cite[theorem 4.3]{ANdimension}) requires strong assumptions to be true.


\section{Final remark on quasi-isometries}

\begin{prop} For a pair of metric spaces $(X,d_X), (Y,d_Y)$ and a coarse equivalence $f\colon (X,d_X)\to (Y,d_Y)$, there exist coarsely and topologically equivalent hyperbolic metrics $d_X',d_Y'$ of the form $d_X'=c_X\circ d_X$, $d_Y'=c_Y\circ~\!d_Y$ such that $f\colon (X,d_X')\to (Y,d_Y')$ is a quasi-isometry. Moreover, it is an isometry up to an additive constant that may be chosen to be arbitrarily small.
\end{prop}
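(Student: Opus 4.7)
The plan is to reuse the inductive construction of the main theorem, with two modifications: take a single concave function $c_X=c_Y=c$, so that one piecewise-linear profile handles both metrics simultaneously, and shrink the step height from $1$ to $\eps/3$ while calibrating the recursion against both the upper and lower coarse controls of $f$. Composition with $l(r)=\ln(r+1)$ at the end will yield hyperbolicity, and since $l$ is $1$-Lipschitz it preserves the additive estimate.

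Concretely, fix nondecreasing control functions $\rho_-,\rho_+\colon[0,\infty)\to[0,\infty)$ for $f$, satisfying
$$\rho_-(d_X(x,y))\leq d_Y(f(x),f(y))\leq \rho_+(d_X(x,y))$$
with $\rho_-(t)\to\infty$, and set $\rho_-^{-1}(s)=\inf\{t:\rho_-(t)\geq s\}$, which is finite since $\rho_-$ is unbounded. For a prescribed $\eps>0$ define $a_0=0$, $a_1=1$ and
$$a_{k+1}=\max\bigl(\rho_+(a_k),\ \rho_-^{-1}(a_k),\ 2a_k-a_{k-1}\bigr),$$
then put $c(a_k)=k\eps/3$ and interpolate affinely. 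The first two arguments of the max synchronise $c$ with both sides of $f$'s distortion; the third enforces concavity. A routine check shows $c$ is piecewise affine, strictly increasing, concave, unbounded and continuous; Lemma \ref{lemma} with Remark \ref{equivalentMetric} then gives coarsely and topologically equivalent metrics $c\circ d_X$ and $c\circ d_Y$.

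For $x,y\in X$ with $t:=d_X(x,y)\in[a_k,a_{k+1}]$, the recursion combined with monotonicity of $\rho_\pm$ yields $\rho_-(t)\geq\rho_-(a_k)\geq a_{k-1}$ and $\rho_+(t)\leq\rho_+(a_{k+1})\leq a_{k+2}$, so both $c(t)$ and $c(d_Y(f(x),f(y)))$ lie in $[(k-1)\eps/3,(k+2)\eps/3]$ (using that $c$ is nondecreasing and $d_Y(f(x),f(y))\in[\rho_-(t),\rho_+(t)]$). Hence
$$|c(d_Y(f(x),f(y)))-c(d_X(x,y))|\leq 2\eps/3.$$
Composing with $l$ and invoking Gromov's classical result gives hyperbolicity of $l\circ c\circ d_X$ and $l\circ c\circ d_Y$; since $|l'|\leq 1$ on $[0,\infty)$, the same additive bound survives, and as $\eps$ was arbitrary the additive constant can be made as small as desired.

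The main obstacle is the bound just displayed: a general coarse equivalence may distort distances by an amount that itself grows arbitrarily fast, so no single fixed $c$ can serve for every $f$. The asymmetric recursion for $a_{k+1}$ is designed precisely to overcome this -- the term $\rho_+(a_k)$ ensures that by the time $t$ reaches its upper control $\rho_+(t)$ only a bounded number of $c$-levels have been crossed, and $\rho_-^{-1}(a_k)$ plays the symmetric role on the lower side. Everything else (concavity, coarse and topological equivalence, hyperbolicity via $l$) follows the template already in place in the paper.
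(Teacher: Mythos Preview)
Your argument is correct and follows essentially the same inductive piecewise-affine construction as the paper, followed by composition with $l(r)=\ln(r+1)$ for hyperbolicity. The only differences are organisational: you use a single concave function $c=c_X=c_Y$ governed by one recursion that incorporates both $\rho_+$ and $\rho_-^{-1}$, and you build the small additive constant into the step height $\eps/3$, whereas the paper runs two interlocking sequences $(a_k),(b_k)$ producing separate $c_X,c_Y$ with unit steps and obtains the small constant by a final rescaling.
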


\begin{proof} Let $\phi,\Phi$ be the contraction and dilation functions\footnote{They are monotone and unbounded functions from the definition of coarse equivalence.} of $f$:
$$\phi\big(d_X(x,x')\big) \leq d_Y\big(f(x), f(x')\big) \leq \Phi\big(d_X(x,x')\big).$$

We will construct $c_X$ and $c_Y$ in inductive steps as in the proof of \ref{mainTheorem} and the claim will follow from \ref{lemma} and \ref{equivalentMetric}. The fact that $f$ is a quasi-isometry will be checked directly.

Let $a_{-1}=0$, $a_0=1$, $c_X(0)=0$, $c_X(1)=1$ and $c_X$ be affine on the interval $[0,1]$. Moreover let $b_{-1}=0$ and $c_Y(0)=0$. Let us enumerate inductive steps by $\{(0,1)\} \cup \N_+\times\Z_2$ with the lexical order.

In the step number $(0,1)$ let $b_0 = \max\big(\Phi(a_0),1\big)$, $c_Y(b_0)=1$ and $c_Y$ be affine on $[0,b_0]$.

In the step number $(k,0)$ let
$$a_k = 
\max\big(
\sup \phi^{-1}([0,b_{k-1}]),\ \
a_{k-1} + (a_{k-1} - a_{k-2})
\big),$$
$c_X(a_k)=k+1$ and $c_X$ be affine on the interval $[a_{k-1}, a_k]$.

In the step number $(k,1)$ let
$$b_k =
\max\big(
\Phi(a_k),\ \
b_{k-1} + (b_{k-1} - b_{k-2})
\big),$$
$c_Y(b_k)=k+1$ and $c_Y$ be affine on the interval $[b_{k-1}, b_k]$.

Now we have (indices of metrics are skipped for simplicity):
\begin{eqnarray*}
\phi\big(d(x,x')\big) \leq & d\big(f(x), f(x')\big) & \leq \Phi\big(d(x,x')\big)\\
c_Y \circ \phi\big( d(x,x')\big) \leq & d'\big(f(x), f(x')\big) &  \leq c_Y \circ \Phi\big( d(x,x')\big).
\end{eqnarray*}
Let $x,x'\in X$ and $d(x,x')=r$, $d'(x,x')=r'$. Assume that $r\in [a_{k-1},a_k]$ for some $k\in \N$ (it means that $r'\in[k,k+1]$). Then, by the formula for $a_{k-1}$ we have $\phi(r) \geq b_{k-2}$, and by the formula for $b_k$: $\Phi(r)\leq b_k$. Thus the following inequalities hold:
\begin{eqnarray*}
k-1 \leq c_Y \circ \phi(d(x,x')) \leq & d'\big(f(x), f(x')\big) & \leq c_Y \circ \Phi(d(x,x'))\leq k+1 \\
d'(x,x') - 2 \leq & d'\big(f(x), f(x')\big) & \leq d'(x,x') + 1.
\end{eqnarray*}

A simple calculation shows that after correcting both $c_X, c_Y$ by composition with the function $l$ given by $l(r)=\ln(r+1)$ to achieve hyperbolicity, the last double inequality still holds\footnote{In fact composition with $l$ translates any quasi-isometry to what we called ``an isometry up to a constant''.}. In order to obtain smaller additive constants it suffices to further multiply both metrics by a small constant~$\eps$.
\end{proof}

A similar argument justifies the following:

\begin{prop} For a pair of metric spaces $(X,d_X), (Y,d_Y)$ and a coarse map $f\colon X\to Y$, there exists a coarsely and topologically equivalent hyperbolic metric $d_Y'$ of the form $d_Y'=c_Y\circ d_Y$ such that $f\colon (X,d_X)\to (Y,d_Y')$ is large scale Lipschitz.
\end{prop}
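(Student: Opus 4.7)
The plan is to adapt the construction of the previous proposition, simplified because only the metric on $Y$ needs to be rescaled. Since $f$ is a coarse map it has a monotone, finite-valued dilation function $\Phi\colon[0,\infty)\to[0,\infty)$ with $d_Y(f(x),f(x'))\leq \Phi(d_X(x,x'))$. I will construct a concave, nondecreasing function $c_Y$ vanishing only at $0$ such that $c_Y\circ\Phi$ is dominated by an affine function; by Lemma \ref{lemma} and Remark \ref{equivalentMetric}, $d_Y'=c_Y\circ d_Y$ is then a coarsely and topologically equivalent metric on $Y$, and the chain $d_Y'(f(x),f(x'))\leq c_Y(\Phi(d_X(x,x')))$ will immediately yield the large scale Lipschitz estimate.

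The construction mimics the induction of Theorem \ref{mainTheorem}, but with the sequence tailored to $\Phi$ rather than to a control function. Set $b_{-1}=0$, $b_0=\max(\Phi(1),1)$, $c_Y(0)=0$, $c_Y(b_0)=1$, and make $c_Y$ affine on $[0,b_0]$. For $k\geq 1$ let
$$b_k=\max\bigl(\Phi(k+1),\ b_{k-1}+(b_{k-1}-b_{k-2})\bigr),$$
set $c_Y(b_k)=k+1$, and extend $c_Y$ affinely on $[b_{k-1},b_k]$. The second term of the maximum keeps the slopes of successive linear pieces nonincreasing, so $c_Y$ is concave, while the first term enforces $\Phi(k+1)\leq b_k$. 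Consequently, for $r\in[k,k+1]$ one has $c_Y(\Phi(r))\leq c_Y(b_k)=k+1\leq r+1$, so $f$ is $(1,1)$-large scale Lipschitz as a map into $(Y,d_Y')$.

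To finish, I would replace $c_Y$ by $l\circ c_Y$ with $l(r)=\ln(r+1)$ in order to secure hyperbolicity, exactly as at the end of the proof of Theorem \ref{mainTheorem}; the composition is still concave and vanishes only at $0$, and since $l$ is nondecreasing and $1$-Lipschitz the large scale Lipschitz bound persists with a slightly larger additive constant. The only delicate point is the familiar balancing act from Theorem \ref{mainTheorem} --- simultaneously enforcing concavity and the growth condition through the two-argument maximum defining $b_k$ --- and because only one metric is being rescaled and we need no lower control on distances, no genuinely new obstacle arises.
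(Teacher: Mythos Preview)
Your proof is correct and follows exactly the approach the paper intends: it is the one-sided specialisation of the preceding proposition's construction, with $c_X$ taken to be the identity (so that $a_k=k+1$) and only the $b_k$-sequence built from the dilation function $\Phi$. The paper itself gives no separate argument here, merely saying ``a similar argument justifies the following'', and your write-up is precisely that similar argument.
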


\begin{bibdiv}
\begin{biblist}
\bib{viaLipschitz}{article}{
    year={2009},
    issn={0021-2172},
    journal={Israel Journal of Mathematics},
    volume={171},
    number={1},
    doi={10.1007/s11856-009-0056-3},
    title={Assouad-Nagata dimension via Lipschitz extensions},
    url={http://dx.doi.org/10.1007/s11856-009-0056-3},
    publisher={The Hebrew University Magnes Press},
    author = {Brodskiy, N.},
    author = {Dydak, J.},
    author = {Highes, J.},
    author = {Mitra, A.},
    pages={405--423},
}

\bib{BDLM}{article}{
    author = {Brodskiy, N.},
    author = {Dydak, J.},
    author = {Levin, M.},
    author = {Mitra, A.},
    title = {A Hurewicz theorem for the Assouad–Nagata dimension},
    volume = {77}, 
    number = {3}, 
    pages = {741--756}, 
    year = {2008}, 
    doi = {10.1112/jlms/jdn005},
    journal = {Journal of the London Mathematical Society},
}

\bib{firstCounter}{article}{
    author = {Buyalo, S.},
    author = {Lebedeva, N.},
    title = {Dimension of locally and asymptotically self-similar spaces},
    language = {Russian},
    journal = {Algebra i Analiz},
    volume = {19},
    year = {2007},
    number = {1},
    pages = {60--92},
    translation = {
        journal = {St. Petersburg Math. J.},
        volume = {19},
        year = {2008},
        number = {1},
        pages = {45--65},
        doi = {10.1090/S1061-0022-07-00985-5},
    },
}

\bib{counterMorita}{article}{
    year={2009},
    issn={0046-5755},
    journal={Geometriae Dedicata},
    volume={141},
    number={1},
    doi={10.1007/s10711-008-9343-0},
    title={Cohomological approach to asymptotic dimension},
    publisher={Springer Netherlands},
    keywords={Asymptotic dimension; Bounded cohomology; Coarse cohomology; Asymptotic cohomological dimension; Coarse cohomological dimension; 20F69},
    author={Dranishnikov, A.},
    pages={59--86},
}

\bib{ANdimension}{article}{
    title = {On asymptotic Assouad–Nagata dimension},
    journal = {Topology and its Applications},
    volume = {154},
    number = {4},
    pages = {934--952},
    year = {2007},
    issn = {0166-8641},
    doi = {10.1016/j.topol.2006.10.010},
    author = {Dranishnikov, A.N.},
    author = {Smith, J.},
}

\bib{Universal}{article}{
    title = {Universal spaces for asymptotic dimension},
    journal = {Topology and its Applications},
    volume = {140},
    number = {2--3},
    pages = {203--225},
    year = {2004},
    issn = {0166-8641},
    doi = {10.1016/j.topol.2003.07.009},
    author = {Dranishnikov, A.},
    author = {Zarichnyi, M.},
}

\bib{Lebedeva}{article}{
    title={Dimensions of products of hyperbolic spaces},
    author={Lebedeva, N.},
    journal = {Algebra i Analiz},
    year = {2007},
    volume = {19},
    number = {1},
    pages = {149--176},
    language = {Russian},
    translation = {
        journal={St. Petersburg Mathematical Journal},
        volume={19},
        number={1},
        pages={107--124},
        year={2008},
        doi = {10.1090/S1061-0022-07-00988-0},
   }
}
   
\bib{licencjat}{thesis}{
    title = {Linear functions in large scale geometry: quasi-isometries and asymptotic Assouad-Nagata dimension},
    author = {Sawicki, D.},
    year = {2012},
    school = {Faculty of Mathematics, Informatics and Mechanics, University of Warsaw},
    type = {Bachelor's thesis},
    language = {Polish},
    eprint = {students.mimuw.edu.pl/~ds292451/pdf/pracalic.pdf}
}

\end{biblist}
\end{bibdiv}

\end{document}